\documentclass[letterpaper, 10 pt, conference]{ieeeconf}
\IEEEoverridecommandlockouts
\usepackage{cite}
\usepackage{amsmath,amssymb,amsfonts}

\usepackage{graphicx}
\usepackage{textcomp}
\usepackage{xcolor}
\usepackage{mathrsfs}
\usepackage{bm}
\usepackage{algorithm}
\usepackage{algpseudocode}
\usepackage{caption}
\usepackage{subcaption}
\usepackage{stfloats}
\usepackage{wrapfig}

\DeclareMathOperator*{\argmin}{arg\,min}

\newtheorem{Theorem}{Theorem}

\newtheorem{Corollary}{Corollary}
\newtheorem{remark}{Remark}
\title{
Convexifying Mean-Field Control: An Occupation-Measure and Frank–Wolfe Approach
}

\author{ Di Yu, Sixiong You and Chaoying Pei
	\thanks{Di Yu is with the Department of Statistics, Purdue University, West Lafayette, IN 47907, USA. Email: {\tt\small yu1128@purdue.edu}}
    \thanks{Sixiong You is with Eli Lilly and Company, Indianapolis, IN 46225, USA. Email: {\tt\small yousixiong@gmail.com}}
	\thanks{Chaoying Pei is with the Department of Mechanical and Aerospace Engineering, Missouri University of Science and Technology, Rolla, MO 65401, USA. Email: {\tt\small cpk4t@mst.edu}}
}

\begin{document}

\maketitle

\begin{abstract}
    Large-scale robotic swarms motivate the use of mean-field control (MFC). Classical partial differential equation (PDE)-based formulations provide a principled framework but {can become computationally challenging in higher dimensions}, whereas machine learning achieves scalability at the cost of approximation and guarantees. {In this work, we establish an optimization-based framework that lifts the MFC problem into the space of occupation measures, resulting in a convex relaxation formulated as an optimization over measures.} The resulting problem is solved using a Frank–Wolfe (FW) algorithm in the measure space, with each iteration reduced to a tractable optimal control problem. This approach retains the O(1/k) convergence rate of FW, avoids discretization of the state space, and naturally incorporates interaction and safety constraints. Numerical experiments demonstrate agreement with analytic and PDE-based baselines in two dimensions and show that the method scales to three-dimensional environments with multiple obstacles, {where standard grid-based PDE solvers become impractical.} A full 3D instance with ten obstacles is solved in minutes on a standard workstation, underscoring the practicality and scalability of the proposed framework.
\end{abstract}

\section{Introduction}
In recent years, robotic systems such as unmanned aerial vehicles (UAVs) have increasingly moved toward swarm deployments, where large populations of agents operate in a coordinated manner. As the swarm size grows, traditional multi-agent control techniques become inadequate due to scalability limits and the difficulty of enforcing interactions across thousands of agents. This challenge has motivated the development of mean-field control (MFC), which provides a principled framework for modeling and optimizing collective behaviors in large populations \cite{elamvazhuthi2019mean}. 

MFC formulates swarm coordination by treating the time-varying state distribution of agents as a probability measure and optimizing its evolution under prescribed objectives such as performance, interaction, and safety. Research in MFC has progressed along several directions. Macroscopic models based on partial differential equations (PDE), or stochastic processes capture swarm behavior at the distribution level and have been applied to coverage, task allocation, and consensus, though PDE-based formulations can face significant scalability challenges as the state dimension grows \cite{fornasier2014mean}. Geometric approaches connect MFC with optimal transport, showing that swarm tracking can be formulated in Wasserstein space, where optimal trajectories follow geodesics \cite{emerick2023continuum}. In parallel, {machine learning (ML) methods}—particularly deep reinforcement learning \cite{chen2020mean} and neural approximations \cite{ruthotto2020machine} of value functions—have become popular for high-dimensional settings, achieving scalability but lacking rigorous guarantees. 

{To alleviate the limitations of PDE- and learning-based approaches, recent work has increasingly focused on optimization-based frameworks for MFC.} Fung and Nurbekyan introduced MFC barrier functions to enforce safety with provable guarantees \cite{fung2025mean}, while Vidal and collaborators employed {kernel expansion and primal–dual formulation} to retain tractability and convergence \cite{vidal2025kernel}. {In parallel, learning-based solution schemes such as fictitious-play and best-response dynamics have been widely studied in the mean-field game literature as iterative methods for computing equilibria, with convergence guarantees under suitable assumptions \cite{cardaliaguet2017learning}.} Together, these developments outline a field where PDE methods provide theoretical foundations, ML dominates practice, and convex optimization emerges as a rigorous and scalable alternative. These advances begin to narrow the gap between rigorous PDE models and scalable ML approaches, but the inherent nonconvexity of many MFC formulations continues to motivate new methods with stronger guarantees. {However, many of these approaches still operate at the PDE or equilibrium level and may face scalability challenges in high-dimensional settings.} Therefore, this motivates approaches that combine scalability with theoretical guarantees.

To address this need, we leverage occupation measures, an established convexification tool in optimal control~\cite{lasserre2008nonlinear}, to reformulate the mean–field control problem as an optimization over measures with linear dynamical constraints, leading to problem~\eqref{mainObj}. This reformulation transforms the inherently nonconvex MFC problem into an exact convex program over measures.

Optimization over measures has been studied extensively in statistics, signal processing, and ML~\cite{1960kie,2017boygeorec,2018mei}. As an infinite–dimensional problem, it is often approximated in finite dimensions, which sacrifices accuracy and discards structural information. Moreover, for grid-based discretization approaches, the computational cost scales exponentially with grid resolution due to the curse of dimensionality~\cite{2017boygeorec,yu2025deterministic}. To overcome these issues, we solve the infinite–dimensional problem directly in measure space without discretization. In problem~\eqref{mainObj}, the first variation of the objective admits a closed form, which enables efficient application of first–order algorithms such as Frank–Wolfe (FW) with provable convergence guarantees.

{Recent work has rigorously established that Frank–Wolfe methods can be applied to optimization over measures in infinite-dimensional settings, with provable convergence guarantees~\cite{yu2025deterministic,yu2025frank,yu2025derivative}.} These methods are projection--free, making them naturally suited to handle linear dynamical constraints, and they avoid discretization by operating directly in the infinite--dimensional measure space. {Related generalized conditional-gradient methods have also been studied in infinite-dimensional and mean-field settings, particularly at the PDE or potential-game level \cite{lavigne2023generalized}.} Complementary to conditional-gradient approaches in the mean-field game literature that operate at the PDE level, {our occupation-measure formulation enables problem~\eqref{mainObj} to be solved iteratively via FW through a sequence of classical optimal control subproblems.} These subproblems can be efficiently solved using a wide range of existing optimal control techniques, thereby opening a new and scalable pathway for solving MFC while preserving feasibility and convexity, with standard $O(1/k)$ convergence guarantees and closed-form trajectory-based updates.

The main contributions of this work are as follows:
\begin{itemize}
    \item {\textbf{Convex formulation via occupation measures:}
We show that the MFC problem admits a convex relaxation in the occupation-measure space, avoiding explicit state-space discretization and providing a principled foundation for large-scale swarm coordination.}
    \item \textbf{Iterative solvability:} We design a Frank--Wolfe algorithm that operates directly in the infinite-dimensional measure space, transforming the problem into a sequence of classical optimal control subproblems. These subproblems can be efficiently solved with existing tools, yielding a simple yet powerful method with standard $O(1/k)$ convergence guarantees. 
    \item \textbf{Scalable validation:} We demonstrate through extensive experiments that our method matches analytic and PDE-based baselines in two dimensions, and crucially, scales to three-dimensional environments with multiple obstacles. Full 3D scenarios with ten obstacles are solved in minutes on a standard workstation---well beyond the reach of PDE solvers---highlighting the practicality and scalability of the framework. 
\end{itemize}

The remainder of the paper is organized as follows: Section~\ref{sec:problem} presents the problem statement and the occupation–measure reformulation. Section~\ref{sec:properties} establishes key properties of the resulting measure optimization problem. Section~\ref{sec:fw_methods} develops Frank–Wolfe methods tailored to this setting. Section~\ref{sec:numerical} provides numerical illustrations in both two- and three-dimensional environments. Section~\ref{sec:conclusion} concludes the paper.

\section{Problem Statement and Reformulation}\label{sec:problem}
In this section, we introduce the $N$-agent swarm model, formulate the associated cost functional, and present its reformulation in the occupation–measure space.
\subsection{$N$-Agent model and Cost Functional}
We consider a swarm of $N$ unmanned aerial vehicles (UAVs) operating over a fixed horizon $[0,T]$. Each agent follows nonlinear dynamics and seeks to coordinate with the swarm under performance, interaction, and safety objectives. The state of the $i$-th agent is $x_i(t)\in\mathbb{R}^{n_x}$ and the control input is $u_i(t)\in\mathbb{R}^{n_u}$, evolving as
\begin{equation}
    \dot{x}_i(t) = f(x_i(t),u_i(t)), 
    i=1,\dots,N,\; t \in [0,T].
\end{equation}

Consider $N$ identical UAVs with states $x_i(t)$ and controls $u_i(t)$. 
The empirical distribution is $\rho_t^N := 1/N \sum_{i=1}^N \delta_{x_i(t)}$. The $N$-agent cost is
\begin{align}\label{eq:NagentMF}
J_N &= \frac{1}{N}\sum_{i=1}^N \int_0^T \Big( \ell_0(t,x_i(t),u_i(t))
+ \lambda (W*\rho_t^N)(x_i(t))\Big) dt \nonumber\\
&\qquad+ \int \Psi(x)\,d\rho_T^N(x).
\end{align}
Here $\ell_0(t,x,u)$ denotes the individual running cost (e.g., control effort or deviation from a desired trajectory), $W:\mathbb{R}^{n_x}\to\mathbb{R}$ is an interaction potential modeling pairwise effects such as collision avoidance or cohesion, $\lambda>0$ is a scalar weighting the interaction term, and $\Psi(x)$ is a terminal cost function encoding the swarm objective at the final time (e.g., reaching a target region). This type of $N$-agent cost is standard in MFC, see e.g., \cite{carmona2018probabilistic}. We assume all agents share the same initial condition $x_i(0)=x_0$, so that the empirical initial distribution reduces to $\rho_0^N = \delta_{x_0}$.


\subsection{Occupation Measures for Single Trajectories}

To prepare for the reformulation of the $N$-agent cost, we associate each
trajectory--control pair with measures on time, state, and control space. 
Let $\mathcal{X}\subset\mathbb R^{n_x}$ and $\mathcal{U}\subset\mathbb R^{n_u}$ 
be compact sets, and define $\Sigma := [0,T]\times \mathcal{X}\times \mathcal{U}$. Denote by $\mathcal M(\Sigma)$ the Banach space of finite signed Borel measures 
on $\Sigma$, equipped with the total variation norm, and by 
$\mathcal M_+(\Sigma)$ its positive cone. 
We focus on the subset
\[
\mathcal M_+(\Sigma,T):=\{\mu\in\mathcal M_+(\Sigma):\ \mu(\Sigma)=T\},
\]
which is convex. When $T=1$, $\mathcal M_+(\Sigma,1)$ coincides with the set of probability measures on $(\Sigma,\mathcal B(\Sigma))$, {where $\mathcal B(\cdot)$ denotes the Borel $\sigma$-algebra}. Similarly, we write $\mathcal M_+(\mathcal{X},1)$ for the set of probability 
measures on $(\mathcal{X},\mathcal B(\mathcal{X}))$.

Given an admissible trajectory
{\small
\begin{equation*}
\tau := \{(x(t),u(t)) : t\in[0,T],\; \dot{x}(t)=f(x(t),u(t)), \; x(0)=x_0\}
\end{equation*}
}
we define the associated \emph{occupation measures} 
$(\mu[\tau],\nu[\tau])$ as follows (see e.g., \cite{lasserre2008nonlinear}). The \emph{running occupation measure} $\mu[\tau]\in\mathcal{M}_+(\Sigma,T)$ is
\[
\mu[\tau](A\times B\times C) 
:= \int_A \mathbf{1}_{B\times C}(x(t),u(t))\,dt,
\]
for any $A\in\mathcal{B}([0,T])$, $B\in\mathcal{B}(\mathcal{X})$, and 
$C\in\mathcal{B}(\mathcal{U})$. 
Equivalently, for any $\varphi\in C(\Sigma)$,
\begin{equation}\label{eq:runningMeasure}
\int_\Sigma \varphi(t,x,u)\,d\mu[\tau](t,x,u) 
= \int_0^T \varphi(t,x(t),u(t))\,dt.
\end{equation}
By construction, $\mu[\tau](\Sigma)=T$. 

The \emph{terminal occupation measure} 
$\nu[\tau]\in \mathcal{M}_+(\mathcal{X},1)$ is
\[
\nu[\tau](B) := \mathbf{1}_B(x(T)), \qquad \forall B\in \mathcal{B}(\mathcal{X}),
\]
that is, $\nu[\tau]=\delta_{x(T)}$. Equivalently, for any $\psi\in C(\mathcal{X})$,
\begin{equation}\label{eq:terminalMeasure}
\int_\mathcal{X} \psi(x)\,d\nu[\tau](x) = \psi(x(T)).
\end{equation}

Thus the pair $(\mu[\tau],\nu[\tau])$ encodes the entire trajectory $\tau$ in measure form. Later we will form empirical averages of these single-trajectory measures to represent the whole swarm, which yields a measure-based reformulation of the $N$-agent cost. Occupation measures have a long history in control theory as a convexification tool; see \cite{lasserre2008nonlinear} for detailed expositions.

\subsection{Reformulation via Occupation Measures}
The next theorem provides the key link between the classical $N$-agent cost and its occupation–measure representation.
\begin{Theorem}[Occupation–measure form of $J_N$]\label{Thm:objJN}
For agent $i=1,\dots,N$ with trajectory $\tau_i$ and occupation measures
$(\mu[\tau_i],\nu[\tau_i])$, define $\mu^N:=\tfrac1N\sum_{i=1}^N \mu[\tau_i]\in\mathcal M_+(\Sigma,T)$ and $\nu^N:=\tfrac1N\sum_{i=1}^N \nu[\tau_i]\in\mathcal M_+(\mathcal X,1)$. Assume $\ell_0,\Psi$ are bounded continuous and $W$ is measurable with finite
integrals. Then the $N$-agent cost \eqref{eq:NagentMF} can be rewritten as
\begin{align}\label{eq:JN-measure}
J_N = \int_{\Sigma} \ell_0(t,x,u)\,d\mu^N(t,x,u)+ \int_{\mathcal X}\Psi(x)\,d\nu^N(x) \nonumber\\
+ \lambda\!\!\iint_{\Sigma\times\Sigma}\!\! W(x-y)\,\delta(t-t')\,d\mu^N(t,x,u)\,d\mu^N(t',y,v).
\end{align}
Here $\delta(t-t')$ denotes the Dirac distribution enforcing $t=t'$ in the
interaction term.

Moreover, the measures $(\mu^N,\nu^N)$ satisfy the averaged weak Liouville identity:
for all $v\in C^1([0,T]\times\mathcal X)$,
\begin{equation}\label{eq:weakN}
\int_{\mathcal X} v(T,x)\,d\nu^N(x) - v(0,x_0) = \int_{\Sigma}\big(\partial_t v+\nabla_x v\!\cdot\! f(x,u)\big)\,d\mu^N.
\end{equation}
\end{Theorem}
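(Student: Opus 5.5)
The plan is to verify each term of \eqref{eq:JN-measure} separately, using only the defining identities \eqref{eq:runningMeasure} and \eqref{eq:terminalMeasure} together with linearity of integration in the measure. Then I will derive \eqref{eq:weakN} from the fundamental theorem of calculus applied along each trajectory, and average over the agents.

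\textbf{Linear terms.} Since $\ell_0$ is bounded and continuous on the compact set $\Sigma$, applying \eqref{eq:runningMeasure} with $\varphi=\ell_0$ to each $\tau_i$ and averaging gives
\[
\frac1N\sum_i\int_0^T \ell_0(t,x_i(t),u_i(t))\,dt=\int_\Sigma \ell_0\,d\mu^N .
\]
Likewise, applying \eqref{eq:terminalMeasure} with $\psi=\Psi$ gives
\[
\int\Psi\,d\rho_T^N=\frac1N\sum_i\Psi(x_i(T))=\int_{\mathcal X}\Psi\,d\nu^N .
\]

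\textbf{Interaction term.} First expand $(W*\rho_t^N)(x_i(t))=\frac1N\sum_j W(x_i(t)-x_j(t))$. The interaction part of $J_N$ then becomes
\[
\frac{\lambda}{N^2}\sum_{i,j}\int_0^T W(x_i(t)-x_j(t))\,dt .
\]
The task is to show that each summand equals $\iint W(x-y)\,\delta(t-t')\,d\mu[\tau_i]\,d\mu[\tau_j]$. Because $\delta(t-t')$ is a distribution, I will give the double integral a precise meaning. Disintegrate each $\mu[\tau_i]$ with respect to its time marginal, which by \eqref{eq:runningMeasure} is Lebesgue measure on $[0,T]$, as $d\mu[\tau_i]=dt\otimes \delta_{(x_i(t),u_i(t))}$. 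Then define
\[
\iint g(t,x,t',y)\,\delta(t-t')\,d\mu\,d\mu' := \int_0^T\!\iint g(t,x,t,y)\,d\mu_t(x)\,d\mu'_t(y)\,dt .
\]
This is equivalent to taking the limit of a mollified $\delta_\epsilon(t-t')$, and the two routes agree for a.e.\ $t$ by Lebesgue differentiation. With this definition, each pairwise term reduces to $\int_0^T W(x_i(t)-x_j(t))\,dt$. Bilinearity in $(\mu,\mu')$ then handles the double sum, giving the factor $1/N^2$ and reproducing the expression with $\mu^N\otimes\mu^N$. The finite-integrals hypothesis on $W$ justifies exchanging the finite sums and integrals, since only measurability is needed once $t$ is fixed.

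This step is the main obstacle. The product $\delta(t-t')\,d\mu^N\,d\mu^N$ is not a genuine measure, and $W$ is only measurable, so the mollifier argument may fail pointwise. I would therefore adopt the disintegration definition directly and remark that it coincides with the mollified limit when $W$ is continuous.

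\textbf{Liouville identity.} Fix $v\in C^1$ and an absolutely continuous trajectory $\tau_i$. By the chain rule,
\[
\frac{d}{dt}v(t,x_i(t))=\partial_t v+\nabla_x v\cdot f(x_i,u_i)
\]
holds for a.e.\ $t$. Integrating over $[0,T]$ gives $v(T,x_i(T))-v(0,x_0)$ on the left. On the right, \eqref{eq:runningMeasure} applied with $\varphi=\partial_t v+\nabla_x v\cdot f$ converts the time integral into $\int_\Sigma\varphi\,d\mu[\tau_i]$; here $\varphi$ is continuous provided $f$ is continuous on $\Sigma$. The left side becomes $\int v(T,\cdot)\,d\nu[\tau_i]-v(0,x_0)$ by \eqref{eq:terminalMeasure}. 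Averaging over $i$, and using that every agent starts at $x_0$, yields \eqref{eq:weakN}.
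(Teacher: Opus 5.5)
Your proposal is correct and follows essentially the same route as the paper: split $J_N$ into running, interaction and terminal parts, convert each via \eqref{eq:runningMeasure} and \eqref{eq:terminalMeasure}, expanding the interaction bilinearly into the pairwise terms $\int_0^T W(x_i(t)-x_j(t))\,dt$, and obtain \eqref{eq:weakN} from the chain rule along each trajectory followed by averaging. Your disintegration definition of the $\delta(t-t')$ pairing via $d\mu^N = dt\,\mu^N_t$ makes precise what the paper's proof leaves implicit (the paper states this disintegration only after the proof), but you should drop the claim that it agrees with the mollified limit ``by Lebesgue differentiation''. As you note yourself afterwards, that agreement can fail when $W$ is merely measurable.
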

\begin{proof} Denote the three parts of $J_N$ in~\eqref{eq:NagentMF} by
\begin{align*}
    I&:=\frac{1}{N}\sum_{i=1}^N \int_0^T \ell_0(t,x_i(t),u_i(t)) dt; \\ 
    II&:=\frac{\lambda}{N}\sum_{i=1}^N \int_0^T (W*\rho_t^N)(x_i(t)) dt;\;\;III&:= \int \Psi(x)\,d\rho_T^N(x).
\end{align*}
From~\eqref{eq:runningMeasure}, for each agent $i$,
\[
\int_0^T \ell_0(t,x_i(t),u_i(t))\,dt
= \int_{\Sigma} \ell_0(t,x,u)\,d\mu[\tau_i](t,x,u).
\]
Averaging over $i$ yields
\begin{equation}
I= \int_{\Sigma} \ell_0(t,x,u)\,d\mu^N(t,x,u).
\end{equation}
Notice that
\begin{align}
&\lambda\iint W(x-y)\,\delta(t-t')\,
d\mu^N(t,x,u)\,d\mu^N(t',y,v) \nonumber \\
&=\frac{\lambda}{N^2}\sum_{i=1}^N \sum_{j=1}^N \iint W(x-y)\,\delta(t-t')\,
d\mu[\tau_i]\,d\mu[\tau_j] \nonumber \\
&=\lambda \int_0^T\frac{1}{N^2} \sum_{i=1}^N \sum_{j=1}^N W(x_i(t)-x_j(t))\,dt = II,
\end{align} where the second equality uses the definition of $\mu[\tau_i]$ and the last equality uses
$\rho_t^N=\tfrac1N\sum_{i=1}^N\delta_{x_i(t)}$.

Similarly, from~\eqref{eq:terminalMeasure} we have
\begin{equation}
    III = \frac1N\sum_{i=1}^N \Psi(x_i(T)) = \int_{\mathcal X}\Psi(x)\,d\nu^N(x).
\end{equation} 
Combining $I$, $II$, and $III$ proves \eqref{eq:JN-measure}.

Fix $v\in C^1([0,T]\times\mathcal X)$. For each agent $i$,
\[
\frac{d}{dt}v\big(t,x_i(t)\big)
=\partial_t v\big(t,x_i(t)\big)+\nabla_x v\big(t,x_i(t)\big)\cdot f\big(x_i(t),u_i(t)\big).
\]
Integrating on $[0,T]$ yields
\[
v\big(T,x_i(T)\big)-v\big(0,x_i(0)\big)
=\int_0^T\!\big(\partial_t v+\nabla_x v\cdot f(x_i,u_i)\big)\,dt.
\]
Average over $i$ and use the occupation–measure identities~\eqref{eq:runningMeasure} and~\eqref{eq:terminalMeasure} we prove~\eqref{eq:weakN}.
\end{proof}
The occupation measures $(\mu^N,\nu^N)$ encode the same distributional information as the empirical measures $\rho_t^N$ and $\rho_T^N$ in the formulation~\eqref{eq:NagentMF}. In particular, $d\mu^N(t,x,u) = dt\,\mu_t^N(dx,du)$, where $\mu_t^N = 1/N\sum_{i=1}^N \delta_{(x_i(t),u_i(t))}$. The state marginal is
\[
\rho_t^N = \pi_x\# \mu_t^N 
= \tfrac{1}{N}\sum_{i=1}^N \delta_{x_i(t)},
\]
{where $\pi_x$ denotes the projection onto the state component,} and the terminal empirical distribution $\rho_T^N$ coincides with $\nu^N$. {Thus the occupation–measure formulation provides a distribution-level
representation of the $N$-agent problem within a convex framework.} 

\subsection{Optimization Problem over Measures}
{Motivated by Theorem~\ref{Thm:objJN}, 
we reformulate the $N$-agent cost as an optimization problem over occupation measures}:
\begin{align}\label{mainObj}
\min_{(\mu,\nu)\in\Delta} & 
J(\mu,\nu) :=
\int_{\Sigma} \ell_0(t,x,u)\,d\mu(t,x,u) \nonumber\\
& + \lambda \iint
W(x-y)\,\delta(t-t')\,d\mu(t,x,u)\,d\mu(t',y,v) \nonumber\\
& + \int_{\mathcal X} \Psi(x)\,d\nu(x), \tag{P}
\end{align}
where the feasible set $\Delta$ is
\begin{align}\label{eq:feasibleSet}
\Delta &:= \Big\{ (\mu,\nu)\in \mathcal M_+(\Sigma,T)\times\mathcal M_+(\mathcal X,1) :\;\nonumber\\
&\int_{\Sigma} \Big(\partial_t v+\nabla_x v\cdot f(x,u)\Big)\,d\mu(t,x,u) \nonumber\\
&= \int_{\mathcal X} v(T,x)\,d\nu(x) - v(0,x_0),
\, \forall v\in C^1([0,T]\times\mathcal X) \Big\}.
\end{align}
{This problem serves as a relaxation of the finite-agent formulation, a viewpoint that is standard in measure-based relaxations of nonconvex problems \cite{bonnans2023large}.}
In the rest of the paper we will focus on problem~\eqref{mainObj}, which is an \emph{infinite-dimensional} optimization problem over measures. In the next section we establish key properties of this problem, including convexity of the feasible set and objective, and the first variation of the objective functional. These properties naturally motivate the use of the FW method, which is projection-free and can directly handle infinite-dimensional problems.

\section{Properties of the Measure Optimization Problem}\label{sec:properties}
\begin{Theorem}[Convexity and compactness]\label{thm:conv} The following assertions about problem~\eqref{mainObj} hold:
\begin{enumerate}
  \item[(a)] If the interaction kernel $W:\mathbb{R}^{n_x}\to\mathbb{R}$ is positive
  semidefinite, then $J(\mu,\nu)$ is convex on
  $\mathcal M_+(\Sigma,T)\times\mathcal M_+(\mathcal X,1)$.
  \item[(b)] The feasible region $\Delta$ is a convex and weak$^*$ compact subset of
  $\mathcal M_+(\Sigma,T)\times\mathcal M_+(\mathcal X,1)$.
\end{enumerate}
\end{Theorem}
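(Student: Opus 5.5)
For part (a), we split $J$ into its three terms. The terms $\int_\Sigma \ell_0\,d\mu$ and $\int_{\mathcal X}\Psi\,d\nu$ are linear in $(\mu,\nu)$, so convexity reduces to the quadratic interaction term
\[
Q(\mu):=\iint W(x-y)\,\delta(t-t')\,d\mu(t,x,u)\,d\mu(t',y,v).
\]
We interpret $Q$ through the disintegration $d\mu=dt\,\mu_t(dx,du)$, which holds for every $\mu$ whose time marginal is Lebesgue measure on $[0,T]$. This time marginal is forced on $\Delta$ by taking $v=v(t)$ in \eqref{eq:feasibleSet}. With $\rho_t:=\pi_x\#\mu_t$ we then have
\[
Q(\mu)=\int_0^T\!\!\iint W(x-y)\,d\rho_t(x)\,d\rho_t(y)\,dt .
\]
For $\mu_0,\mu_1$ and $\theta\in[0,1]$, set $\eta:=\mu_1-\mu_0$ and $\eta_t:=\pi_x\#(\mu_{1,t}-\mu_{0,t})$. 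Expanding the bilinear form gives the standard identity
\[
\theta Q(\mu_1)+(1-\theta)Q(\mu_0)-Q(\theta\mu_1+(1-\theta)\mu_0)=\theta(1-\theta)\int_0^T\!\!\iint W(x-y)\,d\eta_t(x)\,d\eta_t(y)\,dt .
\]
Positive semidefiniteness of $W$ means $\iint W(x-y)\,d\sigma(x)\,d\sigma(y)\ge 0$ for every finite signed measure $\sigma$ on $\mathcal X$. This makes the integrand nonnegative for a.e.\ $t$, so $Q$ is convex. Since $\lambda>0$, $J$ is convex as the sum of linear and convex terms. One caveat: outside the set of measures with Lebesgue time marginal, the $\delta(t-t')$ term needs a convention, for example defining $Q=+\infty$ there. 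I would state this explicitly. It preserves convexity, because that set is itself convex.

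For part (b), convexity is immediate. The sets $\mathcal M_+(\Sigma,T)$ and $\mathcal M_+(\mathcal X,1)$ are convex, and for each fixed $v$ the constraint in \eqref{eq:feasibleSet} is affine in $(\mu,\nu)$. So $\Delta$ is an intersection of convex sets.

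For compactness, $\Sigma$ and $\mathcal X$ are compact, so $\mathcal M(\Sigma)=C(\Sigma)^*$ and $\mathcal M(\mathcal X)=C(\mathcal X)^*$ by Riesz. The sets $\mathcal M_+(\Sigma,T)$ and $\mathcal M_+(\mathcal X,1)$ are bounded in total variation, since they have norms $T$ and $1$. They are also weak$^*$ closed: positivity is tested by $\int\varphi\,d\mu\ge0$ for $\varphi\ge0$, and total mass is tested by $\varphi\equiv1$. Banach–Alaoglu then makes both sets weak$^*$ compact, and so is their product.

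It remains to show $\Delta$ is weak$^*$ closed in this product. For fixed $v\in C^1([0,T]\times\mathcal X)$, the functions $\partial_t v+\nabla_x v\cdot f$ and $v(T,\cdot)$ must lie in $C(\Sigma)$ and $C(\mathcal X)$. This requires $f$ to be continuous on the compact set $\mathcal X\times\mathcal U$, which I will state as a standing assumption. Under it, each constraint is the preimage of a point under a weak$^*$ continuous linear functional, and $\Delta$ is an intersection of such closed sets with a compact set. A closed subset of a compact set is compact. Nonemptiness follows by taking $N=1$ in Theorem~\ref{Thm:objJN}, provided some admissible trajectory stays in $\mathcal X\times\mathcal U$, though nonemptiness is not strictly needed for the claim.

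I expect the main obstacle to be part (a), specifically giving $\delta(t-t')$ a rigorous meaning and fixing the domain on which convexity holds. I would handle it as described: restrict to measures with Lebesgue time marginal (where the disintegration exists), set $J=+\infty$ elsewhere, and interpret positive semidefiniteness as nonnegativity of the energy $\iint W\,d\sigma\,d\sigma$ for signed measures. If $W$ is only measurable, I would additionally assume it is bounded, or continuous, so that these energies are finite.
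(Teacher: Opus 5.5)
Your proof is correct and takes the same route as the paper. For (a) you expand the quadratic interaction term to get $\theta(1-\theta)$ times the energy of $\mu_1-\mu_0$, which is nonnegative by positive semidefiniteness. For (b) you use linearity of the Liouville constraint, weak$^*$ compactness of $\mathcal M_+(\Sigma,T)\times\mathcal M_+(\mathcal X,1)$, and weak$^*$ continuity of the constraint functionals.

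Your additions make precise what the paper handles only formally: the disintegration $d\mu=dt\,\mu_t$, justified by testing with $v=v(t)$; the $+\infty$ convention for $J$ off the set of measures with Lebesgue time marginal; and continuity of $f$. In particular, the paper's appeal to ``$\delta$ being nonnegative'' would not by itself make the product kernel positive semidefinite, and your time-slice argument supplies that step.
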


{\emph{Note.}
Here, weak$^*$ compactness is understood with respect to the weak$^*$ topology on measures, i.e.,
a sequence $\{\mu_k\}$ converges weak$^*$ to $\mu$ if
$\int \phi \, d\mu_k \to \int \phi \, d\mu$ for all $\phi \in C_b(\Sigma)$
(and similarly for measures on $\mathcal X$).}

\begin{proof}
    The first and last terms in $J(\mu,\nu)$ are linear, so it suffices to prove
convexity of the interaction term
\[
F(\mu):=\iint W(x-y)\,\delta(t-t')\,d\mu(t,x,u)\,d\mu(t',y,v).
\] Let $\mu_1,\mu_2\in\mathcal M_+(\Sigma,T)$ and $\alpha\in[0,1]$, A direct expansion gives
    \begin{align*}
        &F(\alpha\mu_1+(1-\alpha)\mu_2) - (\alpha F(\mu_1)+(1-\alpha)F(\mu_2)) \\
        &\quad= -\alpha(1-\alpha)\iint W\,\delta\,d(\mu_1-\mu_2)\,d(\mu_1-\mu_2) \leq 0,
    \end{align*} where the inequality follows from $W$ being positive semidefinite and $\delta$ being nonnegative. Thus $F$ is convex, proving (a). For (b), note that $\mathcal M_+(\Sigma,T)\times\mathcal M_+(\mathcal X,1)$ is convex, and the Liouville constraint in~\eqref{eq:feasibleSet} is linear in $(\mu,\nu)$. Hence $\Delta$ is convex. Since $\mathcal M_+(\Sigma,T)$ and $\mathcal M_+(\mathcal X,1)$ are weak$^*$ compact and the Liouville map in the constraint is weak$^*$ continuous, therefore $\Delta$ is weak$^*$ compact.
\end{proof}

Theorem~\ref{thm:conv} immediately implies the existence of solutions to problem~\eqref{mainObj}.
\begin{Corollary}[Solution Existence]
Problem~\eqref{mainObj} admits at least one optimal solution 
$(\mu^*,\nu^*) \in \Delta$.
\end{Corollary}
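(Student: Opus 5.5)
The plan is to apply the Weierstrass extreme value theorem in the weak$^*$ topology: a lower semicontinuous functional on a nonempty compact set attains its infimum. Theorem~\ref{thm:conv}(b) already gives that $\Delta$ is weak$^*$ compact. What remains is to show that $\Delta$ is nonempty, that $J$ is weak$^*$ lower semicontinuous on $\Delta$, and that $J$ is finite somewhere on $\Delta$.

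\textbf{Nonemptiness.} Pick any admissible trajectory $\tau$ with $x(0)=x_0$ that stays in $\mathcal X$ with controls in $\mathcal U$, for instance the one generated by a constant control. By the single-agent case $N=1$ of Theorem~\ref{Thm:objJN}, the pair $(\mu[\tau],\nu[\tau])$ satisfies the weak Liouville identity~\eqref{eq:weakN}. Hence $(\mu[\tau],\nu[\tau])\in\Delta$. Its cost $J(\mu[\tau],\nu[\tau])$ equals the one-agent cost $J_1$, which is finite because $\ell_0$ and $\Psi$ are bounded and $W$ has finite integrals.

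\textbf{Lower semicontinuity of the linear terms.} Since $\Sigma$ and $\mathcal X$ are compact and $\ell_0,\Psi$ are continuous, the maps $\mu\mapsto\int\ell_0\,d\mu$ and $\nu\mapsto\int\Psi\,d\nu$ are weak$^*$ continuous by definition of the topology.

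\textbf{Lower semicontinuity of the interaction term (main obstacle).} The term $F(\mu)=\iint W(x-y)\delta(t-t')\,d\mu\,d\mu$ is the difficult part, because the factor $\delta(t-t')$ is not a continuous test function on $\Sigma\times\Sigma$. My first step is to disintegrate $\mu$ in time. The Liouville constraint with test functions $v=v(t)$ gives $\int\partial_t v\,d\mu = v(T)-v(0)$, so the time marginal of every $\mu\in\Delta$ is Lebesgue measure on $[0,T]$. We can therefore write $d\mu = dt\,\mu_t(dx,du)$ with $\mu_t$ a probability measure, and then
\[
F(\mu)=\int_0^T\!\!\iint W(x-y)\,d\rho_t(x)\,d\rho_t(y)\,dt, \qquad \rho_t=\pi_x\#\mu_t .
\]
I then plan to write $F$ as a supremum of weak$^*$ continuous functionals, since a supremum of continuous functionals is lower semicontinuous. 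To do this, I use that $W$ is positive semidefinite, assumed continuous as is standard for such kernels, and mollify it in time: set
\[
F_\epsilon(\mu)=\iint W(x-y)\,\eta_\epsilon(t-t')\,d\mu\,d\mu ,
\]
where $\eta_\epsilon$ is a continuous positive-definite time kernel approximating $\delta$, for example a Fejér-type kernel. Each $F_\epsilon$ is weak$^*$ continuous, because $\mu\otimes\mu$ converges weak$^*$ whenever $\mu$ does on a compact space. Positive definiteness of $W$ combined with positive definiteness of $\eta_\epsilon$ should give $F_\epsilon\le F$, up to a correction that vanishes as $\epsilon\to0$, and $F_\epsilon\to F$. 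Taking $\liminf$ along a weak$^*$ convergent sequence then yields $\liminf_k F(\mu_k)\ge F(\mu)$.

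\textbf{Fallback and conclusion.} If that comparison fails, I would instead work at the level of the Fourier representation $W(z)=\int e^{i\xi\cdot z}\,d\hat W(\xi)$ with $\hat W\ge0$ (Bochner). This gives
\[
F(\mu)=\int_0^T\!\!\int |\hat\rho_t(\xi)|^2\,d\hat W(\xi)\,dt,
\]
which is a supremum of continuous quadratic forms, again by Fatou-type arguments. Finally, take a minimizing sequence in $\Delta$. By compactness, extract a weak$^*$ convergent subsequence, metrizable since $C(\Sigma)$ is separable. Its limit lies in $\Delta$, and by lower semicontinuity it attains $\inf_\Delta J$, which gives the optimal pair $(\mu^*,\nu^*)$.
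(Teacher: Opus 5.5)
Your proof is correct and has the same skeleton as the paper's: the direct method on the weak$^*$ compact set $\Delta$ of Theorem~\ref{thm:conv}(b). The paper's argument, however, is only the remark that Theorem~\ref{thm:conv} ``immediately implies'' existence. You supply the two ingredients that remark leaves tacit.

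\textbf{Nonemptiness.} You show $\Delta\neq\emptyset$ via one admissible trajectory. This presupposes that some trajectory from $x_0$ stays in $\mathcal X$, and a constant control need not achieve that.

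\textbf{Lower semicontinuity.} You prove weak$^*$ lower semicontinuity of $J$ on $\Delta$, and this is the substantive step. Convexity from part~(a) does not provide it. Because of the factor $\delta(t-t')$, the interaction term is not a pairing of $\mu\otimes\mu$ with a continuous function. Your observation that every $\mu\in\Delta$ has Lebesgue time marginal settles it, followed by either the Fej\'er-mollification or the Bochner/Fatou argument. The price is assuming $W$ continuous and positive semidefinite, which the corollary does not state.

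Two refinements are worth making.
\begin{enumerate}
\item The hedge ``up to a correction'' is unnecessary. For $\eta_\epsilon\ge 0$ of unit mass with $\hat\eta_\epsilon\ge 0$, one has $0\le\hat\eta_\epsilon\le\hat\delta$. So $F-F_\epsilon$ is itself a positive semidefinite form and $F_\epsilon\le F$ exactly. Moreover $F_\epsilon(\mu)\to F(\mu)$ by dominated convergence in the Fourier variables.
\item Positive definiteness can be dropped entirely. Testing the constraint with $v=\phi(t)\psi(x)$ shows that $t\mapsto\rho_t$ is $\|f\|_\infty$-Lipschitz in the $1$-Wasserstein distance, uniformly over $\Delta$. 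Along a weak$^*$ convergent sequence in $\Delta$, Arzel\`a--Ascoli then yields $\rho^k_t\to\rho_t$ for every $t$. Dominated convergence makes $F$ weak$^*$ continuous on $\Delta$ for any continuous $W$.
\end{enumerate}
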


\begin{Theorem}[First Variation of Objective]\label{thm:firstVariation}
Fix $(\mu,\nu)\in\Delta$. For any signed measures $\delta\mu\in\mathcal M(\Sigma)$ and $\delta\nu\in\mathcal M(\mathcal X)$ with
$\delta\mu(\Sigma)=0$, $\delta\nu(\mathcal X)=0$ that satisfy the
Liouville constraint $\forall v\in C^1([0,T]\times\mathcal X)$
\[
\int_\Sigma (\partial_t v+\nabla_x v\!\cdot\! f)\,d\delta\mu
=\int_{\mathcal X} v(T,x)\,d\delta\nu(x),
\]
the directional derivative of $J$ at $(\mu,\nu)$ along $(\delta\mu,\delta\nu)$
exists and is given by
\begin{align}\label{eq:firstVariation}
\delta J(\mu,\nu;\delta\mu,\delta\nu)&:=\left.\frac{d}{d\varepsilon}\,J(\mu+\varepsilon\delta\mu,\nu+\varepsilon\delta\nu)
\right|_{\varepsilon=0} \nonumber\\
&=:\langle g_{\mu},\delta\mu\rangle + \langle \Psi,\delta\nu\rangle,
\end{align}
where $\langle g_{\mu},\delta\mu\rangle:=\int_\Sigma g_{\mu}(t,x,u)\,d\delta\mu(t,x,u)$ and
$\langle \Psi,\delta\nu\rangle:=\int_{\mathcal X}\Psi(x)\,d\delta\nu(x)$, with
\begin{equation}\label{eq:gmu}
g_{\mu}(t,x,u)=\ell_0(t,x,u)+2\lambda\!\!\int
W(x-y)\,\delta(t-t')\,d\mu(t',y,v).
\end{equation}
\end{Theorem}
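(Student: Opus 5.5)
The plan is to expand $J(\mu+\varepsilon\delta\mu,\nu+\varepsilon\delta\nu)$ exactly as a polynomial of degree two in $\varepsilon$ and differentiate at $\varepsilon=0$. Split $J$ into three parts, as in the proof of Theorem~\ref{thm:conv}. The running-cost term $\int_\Sigma \ell_0\,d\mu$ and the terminal term $\int_{\mathcal X}\Psi\,d\nu$ are linear in $(\mu,\nu)$. Since $\ell_0$ and $\Psi$ are bounded and continuous, they are integrable against the finite signed measures $\delta\mu,\delta\nu$. Their contributions at order $\varepsilon$ are therefore exactly $\varepsilon\int\ell_0\,d\delta\mu$ and $\varepsilon\int\Psi\,d\delta\nu$, with no higher-order terms.

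For the interaction term $F(\mu)$ from the proof of Theorem~\ref{thm:conv}, I will use bilinearity of
\[
B(\mu_1,\mu_2):=\iint W(x-y)\,\delta(t-t')\,d\mu_1(t,x,u)\,d\mu_2(t',y,v).
\]
This gives
\[
F(\mu+\varepsilon\delta\mu)=F(\mu)+\varepsilon\big(B(\mu,\delta\mu)+B(\delta\mu,\mu)\big)+\varepsilon^2 B(\delta\mu,\delta\mu).
\]
The derivative at $\varepsilon=0$ is $B(\mu,\delta\mu)+B(\delta\mu,\mu)$. To merge these two cross terms into $2B(\mu,\delta\mu)$, I need the integrand to be symmetric under swapping $(t,x,u)\leftrightarrow(t',y,v)$. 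I will assume $W$ is even, i.e.\ $W(z)=W(-z)$; this is the natural reading of "positive semidefinite" in Theorem~\ref{thm:conv}. I also use that $\delta(t-t')$ is symmetric. A Fubini swap then gives $B(\delta\mu,\mu)=B(\mu,\delta\mu)$. Writing $B(\mu,\delta\mu)=\int_\Sigma \big(\int W(x-y)\delta(t-t')\,d\mu(t',y,v)\big)\,d\delta\mu(t,x,u)$ identifies the kernel term of $g_\mu$ in \eqref{eq:gmu}, with the factor $2\lambda$.

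The main obstacle is making the $\delta(t-t')$ pairing rigorous enough to justify bilinearity and Fubini. I would handle it by disintegration. Any $\mu\in\mathcal M_+(\Sigma,T)$ satisfying the Liouville constraint has time marginal equal to Lebesgue measure on $[0,T]$; this follows by taking test functions $v=v(t)$. Hence $d\mu=dt\,\mu_t$. The perturbation $\delta\mu=\mu'-\mu$ arises in the Frank--Wolfe setting as a difference of two feasible measures, so it likewise disintegrates as $dt\,\delta\mu_t$, and its time marginal in fact vanishes. I then define
\[
B(\mu_1,\mu_2):=\int_0^T\iint W(x-y)\,d\mu_{1,t}\,d\mu_{2,t}\,dt.
\]
This is manifestly bilinear. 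It is well defined and Fubini applies under the hypothesis of Theorem~\ref{Thm:objJN} that $W$ has finite integrals; if needed I would strengthen this to $W$ bounded measurable on the compact set $\mathcal X-\mathcal X$. Under that bound all three coefficients of the quadratic polynomial in $\varepsilon$ are finite. The derivative therefore exists and equals $\langle g_\mu,\delta\mu\rangle+\langle\Psi,\delta\nu\rangle$.

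Finally, the constraints $\delta\mu(\Sigma)=0$, $\delta\nu(\mathcal X)=0$, and the homogeneous Liouville identity are not needed for the computation itself. They only ensure that $(\mu+\varepsilon\delta\mu,\nu+\varepsilon\delta\nu)$ remains in the affine hull of $\Delta$, so that the directional derivative is meaningful along feasible directions. I will remark on this at the end.
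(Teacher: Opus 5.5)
Your proposal is correct and follows the same route as the paper: the $\ell_0$ and $\Psi$ terms are linear, and the interaction term is expanded as a quadratic in $\varepsilon$ whose linear coefficient is $2\iint W\,\delta\,d\mu\,d\delta\mu$. The paper's two-line computation silently uses $W(z)=W(-z)$ (without it the kernel in $g_\mu$ would be $W(x-y)+W(y-x)$) and never gives the $\delta(t-t')$ pairing a meaning, so your evenness assumption and your time-disintegration argument are added rigor rather than a different approach; note that the disintegration covers directions $\delta\mu=\mu'-\mu$ with $(\mu',\nu')\in\Delta$ (all that Frank--Wolfe uses), not every signed direction in the statement.
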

\begin{proof}
Linearity of the first and last terms in $J(\mu,\nu)$ gives 
$\langle \ell_0,\delta\mu\rangle$ and $\langle \Psi,\delta\nu\rangle$. 
For the interaction term 
\[
F(\mu):=\iint W(x-y)\,\delta(t-t')\,d\mu(t,x,u)\,d\mu(t',y,v),
\]
we expand
\begin{align*}
&\lim_{\varepsilon\to 0}\frac{F(\mu+\varepsilon\delta\mu)-F(\mu)}{\varepsilon} \\
&\quad= 2 \iint W\,\delta\,d\mu\,d\delta\mu 
  + \lim_{\varepsilon\to 0}\varepsilon \iint W\,\delta\,d\delta\mu\,d\delta\mu \\
&\quad= 2 \iint W\,\delta\,d\mu\,d\delta\mu.
\end{align*}
Combining terms proves~\eqref{eq:firstVariation}.
\end{proof}

These properties confirm that problem~\eqref{mainObj} is a well-posed convex optimization problem in an infinite-dimensional space. Moreover, the first variation of $J$ is linear in $(\delta\mu,\delta\nu)$, which makes it the natural first-order object for constructing linearized subproblems. In the next section, we exploit this structure to design an FW type algorithm over measure spaces.

\section{Frank–Wolfe Methods for Optimization over Measures}\label{sec:fw_methods}

In this section, we introduce FW methods for problem~\eqref{mainObj}. To motivate our approach, recall the FW recursion (also known as the \emph{conditional gradient} method~\cite{2015bub}) in finite-dimensional Euclidean spaces. When minimizing a smooth function \( f: \mathbb{R}^d \to \mathbb{R} \) over a compact convex set \( Z \subset \mathbb{R}^d \), the FW recursion is given by
\begin{equation} \label{eq:FW}
    y_{k+1} = (1 - \gamma_k)y_k + \gamma_k s_k, \quad s_k := \argmin_{s \in Z} \nabla f(y_k)^\top s,
\end{equation}
where \( \gamma_k \in (0,1] \) is a step size. This method is ``projection-free'' and primal in that the iterates \( \{y_k\} \) are always feasible; whether the method is efficient depends on how easily the subproblem can be solved.

To extend FW to problem~\eqref{mainObj}, we linearize the objective using the first variation $\delta J$ at $(\mu,\nu)\in\Delta$.
\[
J(\mu,\nu) \approx J(\mu_k,\nu_k) + \delta J(\mu_k,\nu_k;\,\mu-\mu_k,\nu-\nu_k).
\]
This motivates the following recursion over measures:
\begin{align}\label{eq:FWrec}
&(\mu_{k+1},\nu_{k+1}) = (1-\gamma_k)(\mu_k,\nu_k) + \gamma_k (\tilde{\mu}_k,\tilde{\nu}_k), \nonumber\\
&(\tilde{\mu}_k,\tilde{\nu}_k) := \argmin_{(\tilde{\mu},\tilde{\nu})\in \Delta} \, \langle g_{\mu_k},\tilde{\mu}-\mu_k\rangle + \langle \Psi,\tilde{\nu}-\nu_k\rangle,
\end{align}
where $g_{\mu_k}$ is the function defined in~\eqref{eq:gmu}. Since the subproblem is linear over the convex feasible set $\Delta$, its minimizers can be chosen among the occupation measures generated by admissible trajectories. The next theorem makes this explicit.

\begin{Theorem}[Solution to Frank–Wolfe Subproblem]\label{thm:lmo-single}
For fixed $(\mu_k,\nu_k)$, let $\tau_k=(x_k(\cdot),u_k(\cdot))$ be an admissible trajectory satisfying
\begin{equation*}
    \tau_k \in \argmin_{\substack{\dot x=f(x,u)\;\\ x(0)=x_0}}
    \ \int_0^T g_{\mu_k}(t,x(t),u(t))\,dt + \Psi(x(T)).
\end{equation*}
Then the occupation--measure pair $(\mu[\tau_k],\nu[\tau_k])$ is a minimizer of the FW subproblem, i.e.,
\[
(\mu[\tau_k],\nu[\tau_k]) \in 
\argmin_{(\mu,\nu)\in\Delta}\ \langle g_{\mu_k},\mu\rangle+\langle\Psi,\nu\rangle.
\]
\end{Theorem}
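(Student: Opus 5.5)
The plan is to compare the linear functional $L(\mu,\nu):=\langle g_{\mu_k},\mu\rangle+\langle\Psi,\nu\rangle$ on the two sides: single-trajectory occupation measures and the full relaxed set $\Delta$. Two things must be shown. First, $(\mu[\tau_k],\nu[\tau_k])\in\Delta$. Second, $L(\mu[\tau_k],\nu[\tau_k])\le L(\mu,\nu)$ for every $(\mu,\nu)\in\Delta$.

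Feasibility is immediate from Theorem~\ref{Thm:objJN} with $N=1$. The weak Liouville identity~\eqref{eq:weakN} for a single admissible trajectory is exactly the constraint in~\eqref{eq:feasibleSet}. In addition, $\mu[\tau_k](\Sigma)=T$ and $\nu[\tau_k]=\delta_{x_k(T)}$ is a probability measure. By~\eqref{eq:runningMeasure} and~\eqref{eq:terminalMeasure},
\[
L(\mu[\tau],\nu[\tau])=\int_0^T g_{\mu_k}(t,x(t),u(t))\,dt+\Psi(x(T))
\]
for any admissible $\tau$. Hence $L(\mu[\tau_k],\nu[\tau_k])$ equals the optimal value $V^*$ of the classical optimal control problem in the statement.

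The core step is the reverse inequality $\inf_{\Delta}L\ge V^*$, i.e.\ that relaxing to occupation measures does not lower the value of a \emph{linear} cost. I would prove it by weak duality with a verification function.

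\begin{itemize}
\item Let $V(t,x)$ be the value function of the control problem with running cost $g_{\mu_k}$ and terminal cost $\Psi$, starting at time $t$ from state $x$.
\item $V$ is a (viscosity) solution of the HJB equation $\partial_t V+\min_{u}\{g_{\mu_k}+\nabla_x V\cdot f\}=0$ with $V(T,\cdot)=\Psi$. In particular it satisfies the pointwise inequality $\partial_t V+\nabla_x V\cdot f(x,u)+g_{\mu_k}(t,x,u)\ge 0$ on $\Sigma$.
\item If $V\in C^1$, test the constraint in~\eqref{eq:feasibleSet} with $v=V$. For any $(\mu,\nu)\in\Delta$ this gives
\[
\int\Psi\,d\nu - V(0,x_0)=\int(\partial_tV+\nabla_xV\cdot f)\,d\mu\ \ge\ -\int g_{\mu_k}\,d\mu .
\]
Since $V(0,x_0)=V^*$, this yields $L(\mu,\nu)\ge V^*$.
\end{itemize}

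Combining this with the feasibility step gives the claimed inclusion in the $\argmin$.

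The main obstacle is regularity. The value function is generally only Lipschitz, and $g_{\mu_k}$ contains the time-Dirac interaction term, so it may be irregular. My first remedy is to use smooth subsolutions instead of $V$ itself: for each $\epsilon>0$, find $v_\epsilon\in C^1$ with
\[
\partial_tv_\epsilon+\nabla_xv_\epsilon\cdot f+g_{\mu_k}\ge0 \text{ on }\Sigma,\qquad v_\epsilon(T,\cdot)\le\Psi,\qquad v_\epsilon(0,x_0)\ge V^*-\epsilon .
\]
Such $v_\epsilon$ can be obtained by mollification and a small time shift, under continuity of $f$ and $g_{\mu_k}$ and compactness of $\mathcal X,\mathcal U$. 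Repeating the argument with $v_\epsilon$ gives $L\ge V^*-\epsilon$ for every $\epsilon$, hence $L\ge V^*$.

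Alternatively, I would invoke the known absence of a relaxation gap for linear occupation-measure programs~\cite{lasserre2008nonlinear}. That result requires convexity of the velocity set $f(x,\mathcal U)$ together with the cost (a Filippov-type condition), or else interpreting the control problem in relaxed controls. I would state this as a standing assumption rather than prove it.
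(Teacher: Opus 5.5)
Your proposal is correct in outline but takes a genuinely different route from the paper.

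\textbf{The paper's route (primal).} It uses the superposition principle \cite[Thm.~8.2.1]{ambrosio2005gradient} to write an arbitrary $(\mu,\nu)\in\Delta$ as a mixture $\Pi$ over trajectories starting at $x_0$. The linear cost then becomes a $\Pi$-average of single-trajectory costs, each at least the optimal value. The paper simply \emph{assumes} the superposed curves are driven by classical controls and defers relaxed controls to \cite{vinter2010optimal}.

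\textbf{Your route (dual).} You test the Liouville constraint with a (smoothed) value function. Its advantage is that $\partial_t V+\nabla_x V\cdot f(x,u)+g_{\mu_k}(t,x,u)\ge 0$ comes from dynamic programming with constant controls and holds for \emph{every} $u\in\mathcal U$. Integrating it against an arbitrary nonnegative $\mu$ therefore covers relaxed (chattering) behaviour automatically, which is exactly the step the paper leaves open. For the same reason, the Filippov-type convexity assumption in your last paragraph is not needed for your main argument. You also check $(\mu[\tau_k],\nu[\tau_k])\in\Delta$ explicitly, which the paper leaves implicit.

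\textbf{What your route costs.} First, regularity: you need Lipschitz dependence of $f$, $g_{\mu_k}$ and $\Psi$ on $x$. That makes $V$ Lipschitz, makes the inequality hold at a.e.\ point of differentiability, and lets moduli of continuity control the mollification errors. Second, the boundary of $\mathcal X$. Because measures in $\Delta$ live on $\Sigma$, the control problem carries an implicit state constraint $x(t)\in\mathcal X$. The constant-control argument for the constrained value function breaks down at points of $\partial\mathcal X$ where $f(x,u)$ points outward. Switching to the unconstrained value function restores the inequality everywhere, but then $V(0,x_0)$ may undershoot $V^*$. The cleanest fix is to assume $\mathcal X$ contains the reachable set from $x_0$ in its interior, or to impose an inward-pointing condition.

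The paper's primal route gets the state constraint for free, since superposed curves automatically stay in $\mathcal X$, but pays instead in the relaxed-control step.
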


\proofsketch
Let $(\mu,\nu)\in\Delta$. By the probabilistic representation
\cite[Thm.~8.2.1]{ambrosio2005gradient}, {the occupation measures can be disintegrated into a probability measure $\Pi$ over admissible state trajectories. For the purpose of this sketch, we assume these decomposed paths admit classical control representations $\tau=(x(\cdot),u(\cdot))$ with $x(0)=x_0$ and $\dot x=f(x,u)$. A fully rigorous proof requires addressing relaxed controls and approximation arguments \cite{vinter2010optimal}, which are omitted here for brevity. Under this assumption, we have for all $\varphi\in C(\Sigma)$ and $\psi\in C(\mathcal X)$, }
\begin{align*}
\int_\Sigma \varphi\,d\mu&=\int\!\Big(\int_0^T \varphi(t,x(t),u(t))\,dt\Big)d\Pi(\tau),\\
\int_{\mathcal X}\psi\,d\nu&=\int \psi(x(T))\,d\Pi(\tau).
\end{align*}
Taking $\varphi=g_{\mu_k}$ and $\psi=\Psi$ gives
\begin{align*}
&\langle g_{\mu_k},\mu\rangle+\langle\Psi,\nu\rangle \\
&\quad= \int \Big(\int_0^T g_{\mu_k}(t,x(t),u(t))\,dt+\Psi(x(T))\Big)\,d\Pi(\tau) \\
&\quad \ge \min_{\substack{x(0)=x_0\\ \dot x=f(x,u)\ }}
\Big\{\int_0^T g_{\mu_k}(t,x(t),u(t))\,dt+\Psi(x(T))\Big\}.
\end{align*}
By definition of $\tau_k$, the minimum is attained at $\tau_k$; choosing $\mu=\mu[\tau_k]$ and $\nu=\nu[\tau_k]$
yields equality.
\endproof

By Theorem~\ref{thm:lmo-single}, the FW subproblem reduces to selecting the occupation measures of a single admissible trajectory that minimizes the linear objective. Equivalently, the subproblem becomes a \emph{classical optimal control problem} with stage cost $g_{\mu_k}$ and terminal cost $\Psi$. The FW recursion in~\eqref{eq:FWrec} takes the form
\begin{align}\label{FWRecursion}
&(\mu_{k+1},\nu_{k+1})
= (1-\gamma_k)(\mu_k,\nu_k) + \gamma_k\,(\mu[\tau_k],\nu[\tau_k]), \nonumber\\
&\tau_k \in \argmin_{\substack{x(0)=x_0\\ \dot x=f(x,u)\ }}
\ \int_0^T g_{\mu_k}(t,x(t),u(t))\,dt + \Psi(x(T)). 
\end{align}
The approach implied by~\eqref{FWRecursion} thus solves the infinite-dimensional optimization problem by iteratively solving classical optimal control problems and aggregating their occupation measures in Algorithm~\ref{alg:FW}.


\begin{algorithm}[H]
\caption{Frank--Wolfe over Occupation Measures}
\label{alg:FW}
\small
\begin{algorithmic}[1]
\State \textbf{Input:} Initial measures $(\mu_0,\nu_0)$, step sizes $\{\gamma_k\}$
\For{$k=0,1,2,\dots, K$}
    \State Solve the optimal control problem
    \[
      \tau_k \in \argmin_{\substack{x(0)=x_0\\ \dot x=f(x,u)}}
      \ \int_0^T g_{\mu_k}(t,x(t),u(t))\,dt + \Psi(x(T)),
    \] \hfill (with $g_{\mu_k}$ defined in~\eqref{eq:gmu})
    \State Form $(\mu[\tau_k],\nu[\tau_k])$.
    \State 
      $(\mu_{k+1},\nu_{k+1}) = (1-\gamma_k)(\mu_k,\nu_k) + \gamma_k\,(\mu[\tau_k],\nu[\tau_k])$.
\EndFor
\end{algorithmic}
\end{algorithm}
Similar to classical FW methods, with step-sizes $\gamma_k=2/(k+2)$, Algorithm~\ref{alg:FW} guarantees an $O(1/k)$ convergence rate in objective value. A natural extension is the \emph{fully-corrective} variant, which maintains all previously generated trajectories and re-optimizes their weights instead of forming a simple convex combination. This often improves practical performance. We refer to~\cite{yu2025deterministic,yu2025frank} for detailed descriptions and convergence analysis of FW methods in the measure-optimization setting.

One important remark is that although the optimal control problem in Step~3 of Algorithm~\ref{alg:FW} depends on the current iterate $\mu_k$, it remains tractable. Since $\mu_k$ can be written as $\sum_{i=0}^k \alpha_i \mu[\tau_i]$ with trajectories $\tau_i=(x_i,u_i)$, the function $g_{\mu_k}$ has a closed-form expression, and the subproblem reduces to
{\small
\begin{align*}
 \min_{\substack{x(0)=x_0\\ \dot x=f(x,u)}} \;
  &\int_0^T \left( \ell_0(t,x,u)
 + 2\lambda \sum_{i=0}^k \alpha_i W(x-x_i)\right) \,dt + \Psi(x(T)).
\end{align*} }
This is a standard optimal control problem, for which a broad range of mature solution methods and solvers are available, including direct transcription, shooting, and model predictive control–based approaches. Such problems are well studied and supported by efficient software packages, ensuring that the subproblem is not a computational bottleneck. In the numerical experiments, a lightweight gradient-based solver is employed to illustrate that the FW subproblem can be addressed effectively in practice.
{\begin{remark}
The Frank--Wolfe algorithm yields a measure-valued solution
$\mu_K = \sum_{k=0}^{K} \alpha_k \mu[\tau_k]$, which characterizes an optimal occupation measure at the planning level and, for model simplicity, assumes identical initial conditions for all UAVs.
This paper focuses on the formulation and optimization of the mean-field problem, and a gap therefore remains between the planning-level solution and its execution on a finite swarm of UAVs.
In future work, we will employ sampling-based approaches (e.g., Monte Carlo methods) to jointly address the initial state distribution and the realization of the planning-level solution, and to further investigate systematic implementation mechanisms for finite swarms.
\end{remark}}

\section{Numerical Illustration}\label{sec:numerical}

We evaluate the method on a UAV–swarm path–planning task from \(x_0\) to \(x_g\) over horizon \(T\). Each agent follows single–integrator dynamics
\[
\dot x_i(t)=u_i(t),\qquad x_i(0)=x_0,\quad i=1,\ldots,N .
\]

The objective is to minimize the average cost over all agents,
\begin{equation}
\label{eq:sim_objective}
\begin{aligned}
J_N=&\frac{1}{N}\sum_{i=1}^N\Bigg[\int_{0}^{T}\Big(\tfrac{\alpha}{2}\|u_i(t)\|^2 + V_{\mathrm{obs}}(x_i(t))\Big) \,dt \\
&+\int_{0}^{T}\frac{\gamma}{2}\!\!\sum_{j\neq i} W(\|x_i(t)-x_j(t)\|)\,dt \\
&+\frac{\lambda_\Psi}{2}\|x_i(T)-x_g\|^2
\Bigg].
\end{aligned}
\end{equation} 
Note that in this problem the interaction term $W$ couples the agents, so the objective is not simply the sum of individual costs. Reformulated as in Problem~\eqref{mainObj}, this becomes
{\small
\begin{align}\label{eq:sim_measure}
J &= \int \Big(\tfrac{\alpha}{2}\|u\|^2 + V_{\mathrm{obs}}(x)\Big)\,d\mu 
+ \frac{\lambda_\Psi}{2}\int_{\mathcal X}\|x-x_g\|^2\,d\nu(x) \nonumber\\
&\quad + \frac{\gamma}{2}\iint 
W(x-y)\,\delta(t-t')\,d\mu(t,x,u)\,d\mu(t',y,v).
\end{align}}

Repulsion uses an isotropic Gaussian kernel
\(
W(r)=\kappa\exp(-r^{2}/(2\sigma^{2}))
\),
with width \(\sigma\) (sensing radius); \(\kappa\) is set so that repulsion at spacing \(d_{\min}\) balances the terminal pull.
Obstacles are modeled by a smooth potential \(V_{\mathrm{obs}}\).

\subsection{2D Swarm with Repulsion and a Single Obstacle: Verification against PDE}
We consider a scenario in which agents must avoid an obstacle while simultaneously maintaining separation through repulsive interaction. Specifically, a circular obstacle of radius $r_{\mathrm{obs}}=0.8$ is centered at $(2.5,1.5)$ with a margin of $0.2$, and the obstacle potential is modeled by a quadratic penalty of weight $\beta=10^3$. The inter--agent coupling is introduced via the repulsive kernel $W$, parameterized by $\lambda_W=1.5$ and $\sigma_W=0.15$. All other parameters remain the same as in the previous setting: horizon $T=3$, control weight $\alpha=0.1$, terminal penalty $\lambda_\Psi=30$, goal position $x_g=(5,3)$, and initial distribution $\rho_0=\mathcal N((0.12,0.12),\sigma_0^2I)$ with $\sigma_0=0.07$. In this configuration, the optimal swarm distribution must balance goal--seeking, control effort, obstacle avoidance, and mutual repulsion. 

We implement Algorithm~\ref{alg:FW} with step sizes $\gamma_k = 2/(k+2)$ over horizon $T=3$, running $100$ outer iterations. At each iteration, the linear minimization oracle requires solving an optimal control subproblem. In our setting, this subproblem reduces to a quadratic program (QP) with linear dynamics and quadratic cost, a standard form in optimal control that can be handled efficiently using existing solvers. We employ a lightweight gradient-based routine to solve this QP in practice, which is sufficient given the moderate problem dimension and short horizon. 
\begin{figure}[t]
    \centering
    \includegraphics[width=\linewidth]{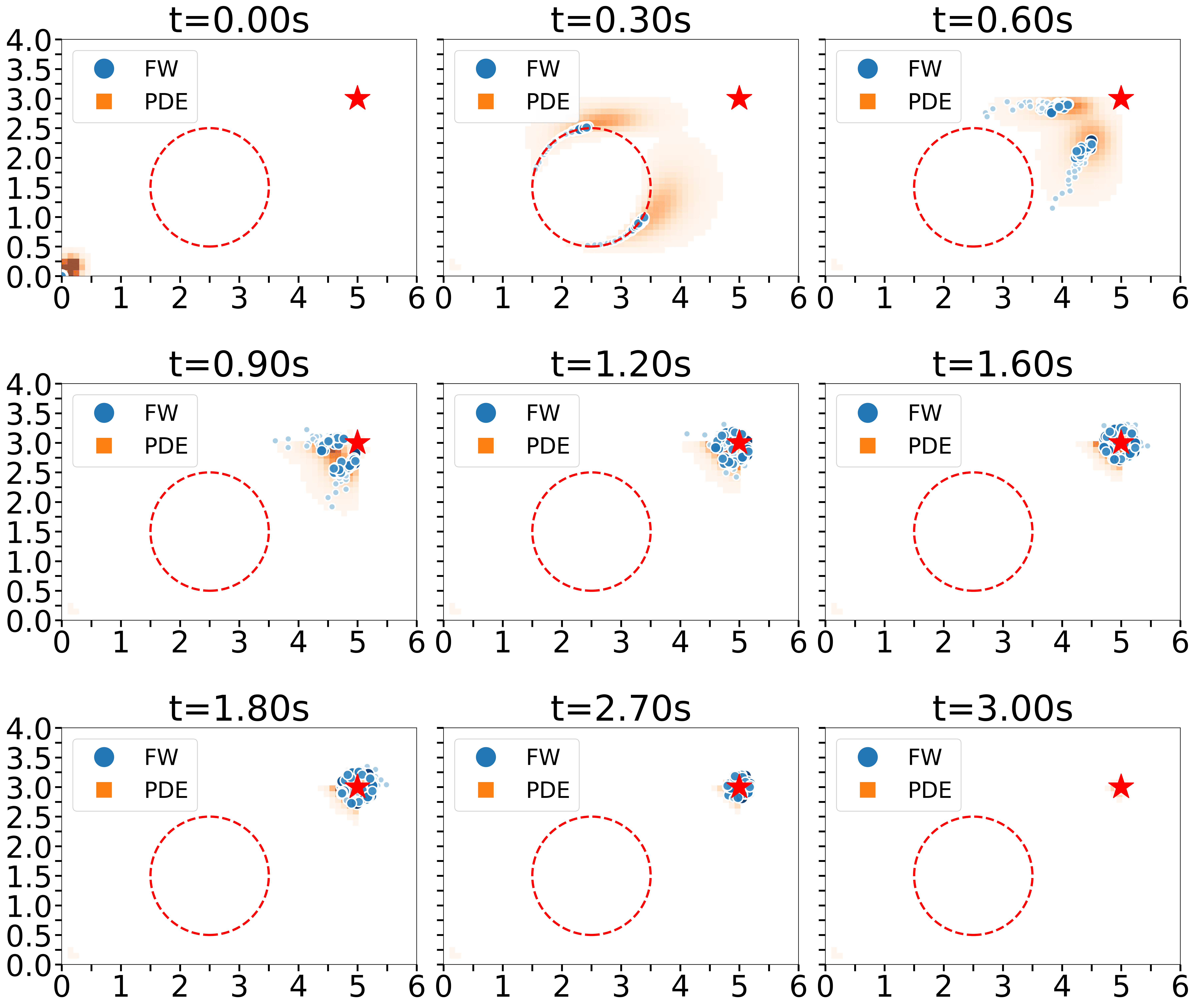}
    \caption{\footnotesize 2D swarm with repulsion and one obstacle: FW (blue points) vs. PDE (orange heatmap).}
    \label{fig:obstacle_compare_2d}
\end{figure}
\begin{figure}[t]
    \centering
    \includegraphics[width=0.45\textwidth]{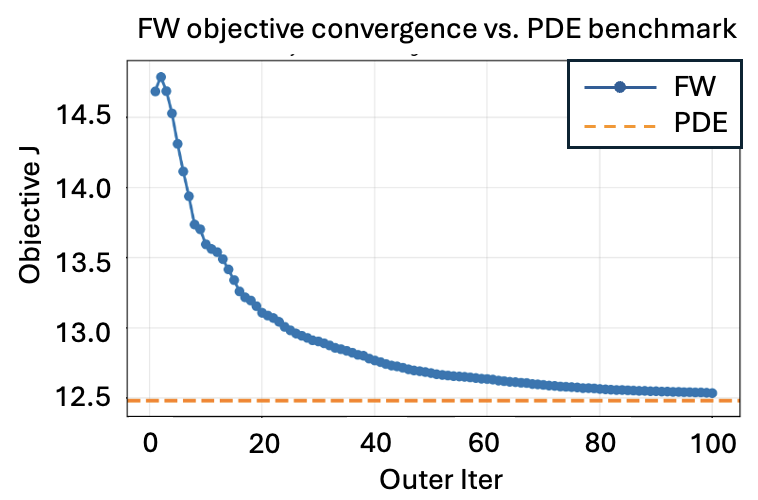}
    \caption{\footnotesize Objective \(J\): FW (blue) approaches PDE (orange).}
    \label{fig:obj_converge}
\end{figure}

Figure~\ref{fig:obstacle_compare_2d} compares the FW solution, visualized as time slices of the occupation measures $\mu_k$ (blue points, with darker points indicating larger weights), with the PDE benchmark density (orange heatmap) in the presence of a repulsive potential and a single obstacle (dashed circle). Both approaches yield similar swarm evolution and terminal distributions, with the agents coordinating to avoid the obstacle and converge to the target (red star). 
Figure~\ref{fig:obj_converge} shows the FW objective decreasing over iterations and approaching the PDE benchmark value. This demonstrates convergence in objective value and validates that the FW scheme faithfully approximates the PDE solution in this more challenging setting.

\subsection{3D Swarm with Repulsion and Multiple Obstacles: Scalability}
In this part, we solve a 3D swarm control problem with ten obstacles and repulsive interactions. For this setting, the computational cost of PDE-based solvers grows explosively, while the FW approach remains much more scalable since it does not rely on discretizing the state space. The parameters are:
$ x_0=(0,0,0),\quad x_g=(5,3,2),\quad T=3,\; N_t=150; \alpha=0.1,\; \lambda_\Psi=30,\; \lambda_W=25,\; \beta=10^3.$
We model obstacles via a smooth potential
\(V_{\mathrm{obs}}:\mathbb{R}^3\to\mathbb{R}_+\) obtained as the superposition
of radial barrier functions around ten randomly generated spherical
obstacles with centers \(c_i\) and safety radii \(R_i\), namely
\[
  V_{\mathrm{obs}}(x)
  = \beta \sum_{i=1}^{10}
    \tau^2 \log^2\!\bigl(1 + e^{(R_i - \|x-c_i\|)/\tau}\bigr),
\]
where \(\beta>0\) and \(\tau>0\) are fixed weights.

Algorithm settings: \(K=50\) FW outer iterations.


In three-dimensional settings, grid-based PDE solvers typically require fine spatial discretization, leading to substantial computational and memory demands. In contrast, the proposed FW framework operates directly in the occupation-measure space and does not rely on state-space gridding.
This enables us to extend the method to the 3D swarm setting with ten obstacles and repulsive interactions at a practical computational cost.
\begin{figure}[t] 
\centering 
\includegraphics[width=\linewidth]{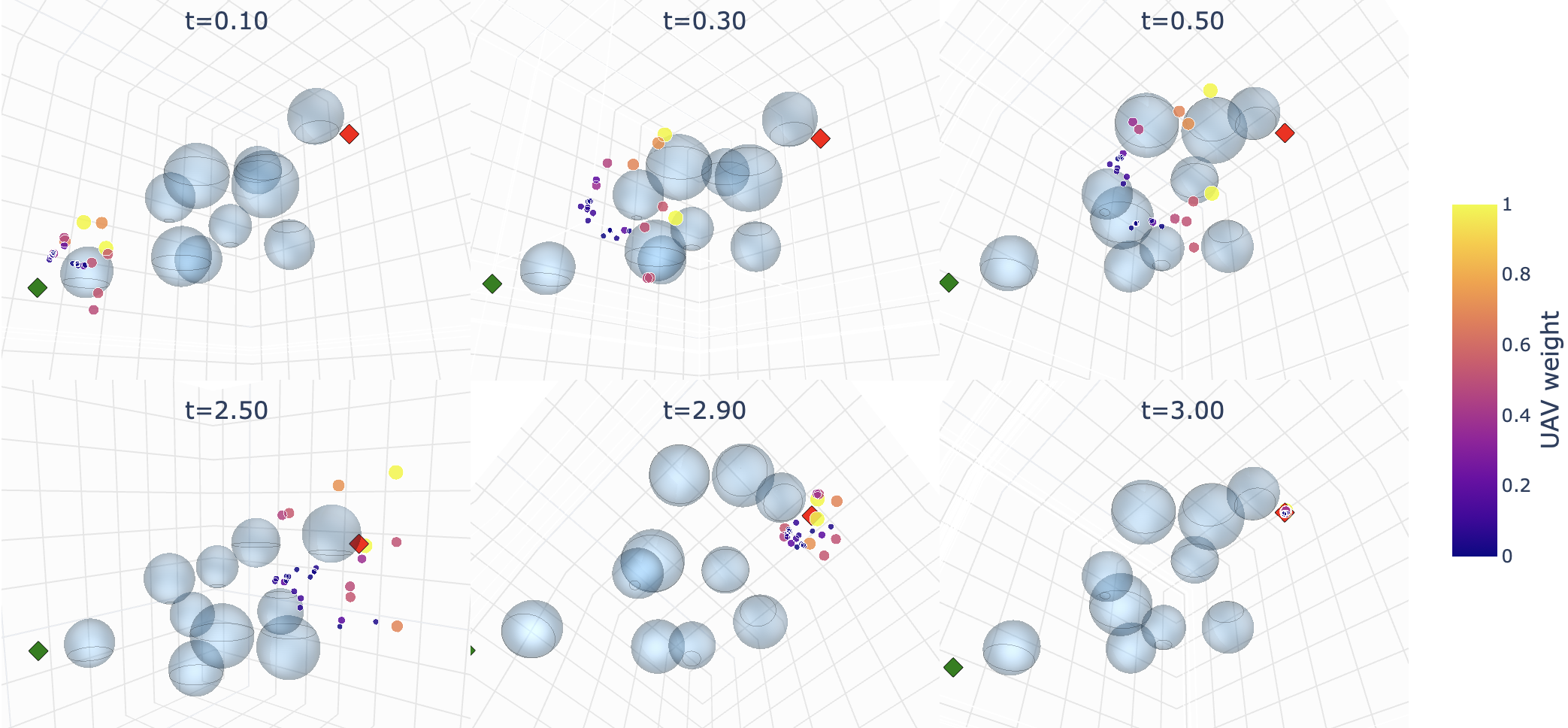} 
\caption{3D swarm with repulsion and 10 obstacles.} 
\label{fig:obstacle_compare_3d} 
\end{figure}

Figure~\ref{fig:obstacle_compare_3d} shows time slices of the UAV distribution obtained from the FW solution. Although the 3D perspective may visually suggest that some UAVs overlap with obstacles, they are in fact outside the obstacle boundaries.
The swarm successfully avoids obstacles, maintains repulsive separation, and gradually converges to the target region. 
The entire 3D instance, including interactions and ten obstacles, is solved in about 11 minutes on a standard workstation, demonstrating the practicality of FW.


\section{Conclusion}\label{sec:conclusion}
We have presented an optimization-based framework for mean-field control that lifts the problem into the space of occupation measures, yielding a convex formulation amenable to Frank–Wolfe methods. By applying the Frank–Wolfe algorithm in this convex measure space, the problem reduces to a sequence of tractable optimal control subproblems, while inheriting provable convergence guarantees. Numerical experiments demonstrate consistency with analytic and PDE-based references in two dimensions and show that the method remains effective in three-dimensional scenarios with multiple obstacles. A full 3D instance with ten obstacles is solved in minutes on a standard workstation, illustrating the practical applicability of the approach. These results suggest that convex optimization in measure spaces provides a viable computational framework for large-scale swarm control problems.

Despite these advances, several challenges remain. Solving the reduced optimal control steps still incurs non-negligible computational cost, which limits real-time applicability. Future research directions include integrating the framework with model predictive control to enable real-time implementations, extending the formulation to stochastic and uncertain dynamics, and investigating distributed realizations suitable for hardware deployment.
\bibliographystyle{ieeetr}
\bibliography{ref}

\end{document}